\begin{document}

\setlength{\textwidth}{145mm} \setlength{\textheight}{203mm}
%\setlength{\parindent}{0mm}
%\setlength{\parskip}{2pt plus 2pt}

%%%% Local Definitions start here
\frenchspacing
\newcommand{\ie}{i.e. }
\newcommand{\X}{\mathfrak{X}}
\newcommand{\W}{\mathcal{W}}
\newcommand{\F}{\mathcal{F}}
\newcommand{\T}{\mathcal{T}}
\newcommand{\U}{\mathcal{U}}
\newcommand{\M}{(M,\f,\xi,\eta,g)}
\newcommand{\Lf}{(G,\f,\xi,\eta,g)}
\newcommand{\R}{\mathbb{R}}
\newcommand{\s}{\mathfrak{S}}
\newcommand{\n}{\nabla}
\newcommand{\f}{\varphi}
\newcommand{\D}{{\rm d}}
\newcommand{\al}{\alpha}
\newcommand{\bt}{\beta}
\newcommand{\gm}{\gamma}
\newcommand{\lm}{\lambda}
\newcommand{\ta}{\theta}
\newcommand{\om}{\omega}
\newcommand{\ea}{\varepsilon_\alpha}
\newcommand{\eb}{\varepsilon_\beta}
\newcommand{\eg}{\varepsilon_\gamma}
\newcommand{\sx}{\mathop{\mathfrak{S}}\limits_{x,y,z}}
\newcommand{\norm}[1]{\left\Vert#1\right\Vert ^2}
\newcommand{\nf}{\norm{\n \f}}
\newcommand{\Span}{\mathrm{span}}

\newcommand{\thmref}[1]{The\-o\-rem~\ref{#1}}
\newcommand{\propref}[1]{Pro\-po\-si\-ti\-on~\ref{#1}}
\newcommand{\secref}[1]{\S\ref{#1}}
\newcommand{\lemref}[1]{Lem\-ma~\ref{#1}}
\newcommand{\dfnref}[1]{De\-fi\-ni\-ti\-on~\ref{#1}}
\newcommand{\corref}[1]{Corollary~\ref{#1}}
%\newcommand{\eqref}[1]{(\ref{#1})}

%\renewcommand{\thefootnote}{\fnsymbol{footnote}}

% THEOREM Environments ---------------------------------------------------
%\newtheorem{thm}{Theorem}
%\newtheorem{lem}[thm]{Lemma}
%\newtheorem{prop}[thm]{Proposition}
%\newtheorem{cor}[thm]{Corollary}
%\newdefinition{rmk}{Remark}
%\newdefinition{ack}{Acknowledgements}
%\newproof{pf}{Proof}
%\newproof{pot}{Proof of Theorem \ref{thm-geom}}

\numberwithin{equation}{section}
\newtheorem{thm}{Theorem}[section]
\newtheorem{lem}[thm]{Lemma}
\newtheorem{prop}[thm]{Proposition}
\newtheorem{cor}[thm]{Corollary}

\theoremstyle{definition}
\newtheorem{defn}{Definition}[section]

\hyphenation{Her-mi-ti-an ma-ni-fold ah-ler-ian}

\title[Curvature properties on some classes] {Curvature properties on some classes of almost
contact manifolds with B-metric}

\author{Mancho Manev}

%%% ----------------------------------------------------------------------
\maketitle
%%% ----------------------------------------------------------------------

{\small
%\begin{abstract}
\textbf{Abstract}

Almost contact manifolds with B-metric are considered. Of special
interest are the so-called vertical classes of the almost contact
B-metric manifolds. Curvature properties of these manifolds are
studied. An example of 5-dimensional manifolds is constructed and
characterized.

\textbf{Key words:} almost contact manifold, B-metric, indefinite
metric, Lie group.

\textbf{2010 Mathematics Subject Classification:} 53C15, 53D15,
53C50. }

%\begin{center}
%\today
%\end{center}

%\setcounter{tocdepth}{2} \tableofcontents

\section{Introduction}\label{sec1}

In this work\footnote{Partially supported by project NI11-FMI-004
of the Scientific Research Fund, Paisii Hilendarski University of
Plovdiv, Bulgaria} we continue the investigations on a manifold
$M$ with an almost contact structure $(\f,\xi,\eta)$ which is
equipped with a B-metric $g$, i.e. a pseudo-Riemannian metric of
signature $(n,n+1)$ with the opposite compatibility of the metric
with the structure in comparison with the known almost contact
metric structure. Moreover, the B-metric is an odd-dimensional
analogue of the Norden metric on almost complex manifolds.

Recently, manifolds with neutral metrics and various tensor
structures have been object of interest in theoretical physics.

The classes of the almost contact B-metric manifolds from the
so-called vertical component is an object of special interest in
this paper.
The goal of the present work is the investigation of some problems
of the geometry of such manifolds.

The paper is organized as follows. In Sec.~\ref{sec2} we give some
necessary facts about the considered manifolds.
In Sec.~\ref{sec3} we obtain some curvature properties on almost
contact B-metric manifolds in these classes.
In Sec.~\ref{sec4} we consider and characterize a family of
5-dimensional Lie groups as almost contact B-metric manifolds from
a basic class.

\section{Almost contact manifolds with B-metric}\label{sec2}

Let $(M,\f,\xi,\eta,g)$ be an almost contact manifold with
B-metric or an \emph{almost contact B-metric manifold}, i.e. $M$
is a $(2n+1)$-dimensional differentiable manifold with an almost
contact structure $(\f,\xi,\eta)$ consists of an endomorphism $\f$
of the tangent bundle, a vector field $\xi$, its dual 1-form
$\eta$ as well as $M$ is equipped with a pseudo-Riemannian metric
$g$  of signature $(n,n+1)$, such that the following algebraic
relations are satisfied
\begin{equation}\label{str}
\begin{array}{c}
\f\xi = 0,\quad \f^2 = -Id + \eta \otimes \xi,\quad
\eta\circ\f=0,\quad \eta(\xi)=1,\\[4pt]
g(\f x, \f y ) = - g(x, y ) + \eta(x)\eta(y)
\end{array}
\end{equation}
for arbitrary $x$, $y$ of the algebra $\X(M)$ on the smooth vector
fields on $M$.

The associated B-metric $\tilde{g}$ of $g$ on $M$ is defined by
$\tilde{g}(x,y)=g(x,\f y)+\eta(x)\eta(y)$. Both metrics are
necessarily of signature $(n,n+1)$. The manifold
$(M,\f,\xi,\eta,\tilde{g})$ is an almost contact B-metric
manifold, too.

Further, $x$, $y$, $z$, $w$ will stand for arbitrary elements of
$\X(M)$ unless other is specialized.

The structural group of $\M$ is $G\times I$, where $I$ is the
identity on $\Span(\xi)$ and $G=\mathcal{GL}(n;\mathbb{C})\cap
\mathcal{O}(n,n)$. More precisely, $G$ consists of real square
matrices of order $2n$ of the following type
\[
\left(%
\begin{array}{r|r}
  A & B \\ \hline
  -B & A \\
\end{array}%
\right),
\qquad %
\begin{array}{l}
  AA^T-BB^T=I_n,\\[4pt]%
  AB^T+BA^T=O_n,
\end{array}%
\quad A, B\in \mathcal{GL}(n;\mathbb{R}),
\]
where $I_n$ and $O_n$ are the unit matrix and the zero matrix of
size $n$, respectively.

A classification of the almost contact B-metric manifolds is given
in \cite{GaMiGr}. This classification is made with respect to the
(0,3)-tensor $F$ defined by
\begin{equation}\label{1.2}
F(x,y,z)=g\bigl( \left( \nabla_x \f \right)y,z\bigr),
\end{equation}
where $\nabla$ is the Levi-Civita connection of $g$. The tensor
$F$ has the properties
\begin{equation*}\label{1.3}
F(x,y,z)=F(x,z,y)=F(x,\f y,\f
z)+\eta(y)F(x,\xi,z)+\eta(z)F(x,y,\xi).
\end{equation*}
Let us recall the following general relations \cite{ManGri1}:
\begin{equation}\label{eta-xi}
    F(x,\f
    y,\xi)=\left(\n_x\eta\right)y=g\left(\n_x\xi,y\right),\qquad
    \eta\left(\n_x \xi\right)=0.
\end{equation}

The components of the inverse matrix of $g$ are denoted by
$g^{ij}$ with respect to the basis $\{e_i;\xi\}$ of the tangent
space $T_pM$ of $M$ at an arbitrary point $p\in M$. The following
basic 1-forms are associated with $F$ for any $z\in T_pM$:
\begin{equation}\label{lee}
    \theta(z)=g^{ij}F(e_i,e_j,z),\quad
    \theta^*(z)=g^{ij}F(e_i,\f e_j,z),\quad
    \omega(z)=F(\xi,\xi,z).
\end{equation}

The basic classes in the mentioned classification are $\F_1$,
$\F_2$, $\dots$, $\F_{11}$. Their intersection is the class $\F_0$
determined by the condition $F=0$.

In \cite{Man}, it is proved that the class
$\U=\F_4\oplus\F_5\oplus\F_6\oplus\F_7\oplus\F_8\oplus\F_9$ is
defined by the conditions
\begin{equation}\label{F4-9}
    F(x,y,z)=\eta(y)F(x,z,\xi)+\eta(z)F(x,y,\xi),\qquad
    F(\xi,y,z)=0.
\end{equation}

It is known \cite{Sa}, an almost contact manifold
$(M,\f,\xi,\eta)$ is called \emph{normal} if the corresponding
almost complex structure $J$ on the even-dimensional manifold
$M\times\R$ is integrable, i.e. the Nijenhuis torsion
$[J,J](x,y)=-[x,y]+[Jx,Jy]-J[Jx,y]-J[x,Jy]$ is identically zero.
According to \cite{Blair}, the necessary and sufficient condition
$(M,\f,\xi,\eta)$ to be normal is the annulment of the Nijenhuis
tensor $N$ defined by $N(x, y ) := [\f, \f](x, y ) + \D\eta(x, y
)\xi$, where $[\f,\f](x,y)=\f^2[x,y]+[\f x,\f y]-\f[\f x,y]-\f
[x,\f y]$. It is known from \cite{Man} that the class of the
normal contact B-metric manifolds is
$\F_1\oplus\F_2\oplus\F_4\oplus\F_5\oplus\F_6$. Moreover, $\eta$
is closed there, i.e. $\D\eta=0$. Therefore, also we have
$[\f,\f]=0$ for the normal contact B-metric manifolds.

In \cite{Man}, there are considered the linear projectors $h$ and
$v$ over $T_pM$ which split (orthogonally and invariantly with
respect to the structural group) any vector $x$ to a horizontal
component $h(x)=-\f^2x$ and a vertical component
$v(x)=\eta(x)\xi$. The decomposition $T_pM=h(T_pM)\oplus v(T_pM)$
generates the corresponding distribution of basic tensors $F$,
which gives the horizontal component $\F_1\oplus\F_2$ and the
vertical component $\F_4\oplus\F_5\oplus\F_6$ of the class of the
normal contact B-metric manifolds.

%In this work we pay attention on a part of the co-called
%\emph{vertical classes} $\F_i$ $(i=4,5,6,7,8,9)$, bearing in mind
%the above decomposition.

By the additional conditions
\begin{equation}\label{F4-6}
F(x,y,\xi)=F(y,x,\xi)=-F(\f x,\f y,\xi),
\end{equation}
it is determined the subclass
$\U_1=\F_4\oplus\F_5\oplus\F_6\subset\U$, which is the class of the normal
contact manifolds from the vertical classes.
This  subclass is also characterized by \eqref{F4-9} and $N=0$. The
classes $\F_4$, $\F_5$ and $\F_6$ are determined in $\U_1$ by
the conditions $\theta^*=0$, $\theta=0$ and $\theta=\theta^*=0$,
respectively.
Another direct sum of basic classes considered below is
$\U_2=\F_4\oplus\F_5\oplus\F_6\oplus\F_7\subset\U$, which is determined by
\eqref{F4-9} and
\begin{equation}\label{F4-7}
F(x,y,\xi)=-F(\f x,\f y,\xi).
\end{equation}

By analogy with the square norm of $\nabla J$ for an almost
complex structure $J$, we define the \emph{square norm of $\nabla
\f$} by \cite{Man31}
\begin{equation}\label{snf}
    \norm{\nabla \f}=g^{ij}g^{ks}
    g\bigl(\left(\nabla_{e_i} \f\right)e_k,\left(\nabla_{e_j}
    \f\right)e_s\bigr).
\end{equation}
It is clear that $\norm{\nabla \f}=0$ is valid if $\M$ is a
$\F_0$-manifold, but the inverse implication is not always true.
An almost contact B-metric manifold having a zero square norm of
$\n\f$ is called an \emph{isotropic-$\F_0$-manifold} \cite{Man31}.
Ana\-logously, we consider the square norm of $\n\eta$ and $\n\xi$
as follows:
\begin{equation*}\label{snxi}
    \norm{\nabla \eta}=g^{ij}g^{ks}
    \left(\nabla_{e_i}\eta\right)e_k\left(\nabla_{e_j}\eta\right)e_s, \quad
    \norm{\nabla \xi}=g^{ij}
    g\bigl(\nabla_{e_i} \xi,\nabla_{e_j}
    \xi\bigr).
\end{equation*}
Obviously, the relation $\norm{\nabla \eta}=\norm{\nabla \xi}$ is
valid, because of \eqref{eta-xi}.

\begin{prop}\label{prop-sn-4-9}
On any almost contact B-metric manifold in $\U$ the following
relation holds
$
\norm{\nabla \f}=-2\norm{\nabla \eta}=-2\norm{\nabla \xi}.
$
\end{prop}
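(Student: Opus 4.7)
The plan is to first derive a compact closed form for the tensor $(\n_x\f)y$ on a $\U$-manifold and then evaluate the trace directly. Starting from $F(x,y,\xi)=g((\n_x\f)y,\xi)$ and expanding via the Leibniz rule, using $\eta(\f\cdot)=0$ and $g(v,\xi)=\eta(v)$ (both immediate from \eqref{str}), one gets $F(x,y,\xi)=-g(\f y,\n_x\xi)$. The standard identity $g(\f u,v)=g(u,\f v)$, which follows from the B-metric compatibility in \eqref{str}, then rewrites this as $F(x,y,\xi)=-g(\f\n_x\xi,y)$. Substituting into the characterization \eqref{F4-9} of $\U$ and using $\eta(z)=g(\xi,z)$, the non-degeneracy of $g$ yields the key formula
\[
(\n_x\f)y = -\eta(y)\,\f\n_x\xi - g(\f\n_x\xi,y)\,\xi.
\]

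Next I would compute $g((\n_x\f)y,(\n_{x'}\f)y')$ from this expression. The two cross-terms each contain the factor $g(\f\n\xi,\xi)=\eta(\f\n\xi)=0$, so they drop out. In the surviving terms \eqref{str} gives $g(\f\n_x\xi,\f\n_{x'}\xi)=-g(\n_x\xi,\n_{x'}\xi)+\eta(\n_x\xi)\eta(\n_{x'}\xi)$, and the last summand vanishes by \eqref{eta-xi}. Hence
\[
g((\n_x\f)y,(\n_{x'}\f)y') = -\eta(y)\eta(y')\,g(\n_x\xi,\n_{x'}\xi) + g(\f\n_x\xi,y)\,g(\f\n_{x'}\xi,y').
\]
Substituting into the definition \eqref{snf} and contracting with $g^{ij}g^{ks}$, the identity $g^{ks}\eta(e_k)\eta(e_s)=g(\xi,\xi)=1$ converts the first piece into $-\norm{\n\xi}$, while $g^{ks}g(\f\n_{e_i}\xi,e_k)g(\f\n_{e_j}\xi,e_s)=g(\f\n_{e_i}\xi,\f\n_{e_j}\xi)=-g(\n_{e_i}\xi,\n_{e_j}\xi)$ converts the second piece into another $-\norm{\n\xi}$. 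Therefore $\norm{\n\f}=-2\norm{\n\xi}$, and the equality $\norm{\n\xi}=\norm{\n\eta}$ already noted in the excerpt finishes the proof.

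The main obstacle is the first step: recognizing that in the vertical class the full tensor $\n\f$ is determined by $\n\xi$ alone. This is not visible from \eqref{F4-9} by itself; it requires combining that characterization with $\f\xi=0$, $\eta\circ\f=0$ and the symmetry $g(\f u,v)=g(u,\f v)$. Once that closed form is in hand, the remaining contractions are routine.
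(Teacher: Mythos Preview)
Your argument is correct and is precisely an explicit unpacking of the paper's one-line proof, which just says the relation ``follows immediately from \eqref{F4-9}''; you carry out that immediate computation by extracting the closed form $(\n_x\f)y=-\eta(y)\,\f\n_x\xi-g(\f\n_x\xi,y)\,\xi$ from \eqref{F4-9} and then contracting. There is no methodological difference to report.
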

\begin{proof}
It follows immediately from \eqref{F4-9}.
\end{proof}

\section{Curvature properties on the class $\U$}\label{sec3}

Let $R=\left[\n,\n\right]-\n_{[\ ,\ ]}$ be the curvature
(1,3)-tensor of $\nabla$. We denote the curvature $(0,4)$-tensor
by the same letter: $R(x,y,z,w)$ $=g(R(x,y)z,w)$. It has the
properties:
\begin{equation}\label{R}
    R(x,y,z,w)=-R(y,x,z,w)=-R(x,y,w,z),\;
    \sx R(x,y,z,w)=0,
\end{equation}
where $\sx$ denotes the cyclic sum by $x,y,z$.

The property $R(x,y,\f z,\f w)=-R(x,y,z,w)$ is valid for $R$ on a
$\F_0$-mani\-fold, because of $\n\f=0$ there.

The Ricci tensor $\rho$ and the scalar curvature $\tau$ for $R$ as
well as their associated quantities are defined respectively by
\begin{gather}
    \rho(y,z)=g^{ij}R(e_i,y,z,e_j),\qquad
    \tau=g^{ij}\rho(e_i,e_j),\label{rho}\\[4pt]
    \rho^*(y,z)=g^{ij}R(e_i,y,z,\f e_j),\qquad
    \tau^*=g^{ij}\rho^*(e_i,e_j).\label{rho*}
\end{gather}

Further we use the notation $g\odot h$ for the Kulkarni-Nomizu
product of two (0,2)-tensors, \ie $ \left(g\odot
h\right)(x,y,z,w)=g(x,z)h(y,w)-g(y,z)h(x,w)
+g(y,w)h(x,z)-g(x,w)h(y,z). $
Obviously, $g\odot h$ has the properties of $R$ in \eqref{R} when $g$ and $h$ are symmetric.

\begin{thm}\label{thm-R-F4-9}
On any almost contact B-metric manifold in $\U$ the following
relation holds
\begin{equation}\label{Rfih}
\begin{split}
2R(x,y,\f^2 z,\f^2 w)&+2R(x,y,\f z,\f w)\\[4pt]
&=\left(h\odot h\right)(x,y,z,w)+\left(h\odot h\right)(x,y,\f z,\f
w),
\end{split}
\end{equation}
where $h(x,y)=\left(\n_x\eta\right)y$.
%is a symmetric and
%$\f$-hybrid (0,2)-tensor, i.e. $h(x,y)=h(y,x)=-h(\f x,\f y)$.
\end{thm}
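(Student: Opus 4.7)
\emph{Proof plan.} The plan is to exploit the structural identity \eqref{F4-9} to write $\n\f$ as an explicit expression in $\n\xi$, then to differentiate once more and invoke the Ricci identity for $\f$ in order to convert the $(R,\f)$-commutator into products of $h$.

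First I would derive the formula
\[
(\n_x\f)y = -\eta(y)\,\f\n_x\xi - h(x,\f y)\,\xi.
\]
Starting from \eqref{F4-9} and \eqref{eta-xi} (which gives $F(x,\f y,\xi)=h(x,y)$), and using $F(x,\xi,\xi)=0$ (immediate from \eqref{F4-9} at $y=z=\xi$) together with $\f^2 = -\mathrm{Id}+\eta\otimes\xi$, one finds $F(x,y,\xi) = -h(x,\f y)$. Substituting this back into \eqref{F4-9} gives $F(x,y,z) = -\eta(y)h(x,\f z) - \eta(z)h(x,\f y)$, from which the displayed formula for $(\n_x\f)y$ follows once one notes that $\f$ is $g$-self-adjoint --- a direct consequence of the compatibility in \eqref{str} together with $g(\f x,\xi)=0$, obtained by setting $x=\xi$ in that relation.

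Setting $A(y,z):=(\n_y\f)z$ and $T:=\f\circ\n\xi$, so that $A(y,z) = -\eta(z)T(y) - g(T(y),z)\xi$, I would then expand $(\n_x A)(y,z)$ via Leibniz; after the $\n_x y$- and $\n_x z$-contributions cancel in pairs, what remains is
\[
(\n_x A)(y,z) = -h(x,z)T(y) - \eta(z)(\n_x T)(y) - g((\n_x T)(y),z)\,\xi - g(T(y),z)\,\n_x\xi.
\]
Alternating in $x,y$, applying the Ricci identity $(\n^2_{x,y}\f - \n^2_{y,x}\f)z = R(x,y)\f z - \f R(x,y)z$, and pairing with $w$ (using $g(T(y),w) = h(y,\f w)$ by self-adjointness of $\f$) yields an identity of the shape
\[
R(x,y,\f z,w) - R(x,y,z,\f w) = [\text{$h$-products}] - \eta(z)R(x,y,\xi,\f w) - \eta(w)R(x,y,\xi,\f z).
\]
The crucial simplification is $(\n_x T)(y) - (\n_y T)(x) = \f R(x,y)\xi$: the $\n(\n\xi)$-alternator reduces to $R(x,y)\xi$ by the Ricci identity for $\xi$, while $(\n_x\f)\n_y\xi - (\n_y\f)\n_x\xi$ vanishes since the $T$-components are killed by $\eta(\n\xi)=0$ and the $\xi$-components are equal by self-adjointness of $\f$.

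Finally, substituting $w\mapsto\f w$ in the last displayed identity and using $\eta\circ\f=0$, $R(x,y,\xi,\xi)=0$, and $h(\cdot,\f^2 u) = -h(\cdot,u)$ (which follows from $h(\cdot,\xi)=0$), I would obtain an explicit formula for $R(x,y,\f z,\f w)$. Adding $R(x,y,\f^2 z,\f^2 w)$, expanded via $\f^2 = -\mathrm{Id}+\eta\otimes\xi$ and the antisymmetry of $R$, annihilates all curvature terms on the right-hand side and leaves exactly the combination $h(x,z)h(y,w) - h(y,z)h(x,w) + h(x,\f z)h(y,\f w) - h(y,\f z)h(x,\f w)$, which doubles to the right-hand side of \eqref{Rfih}. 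The principal obstacle I anticipate is bookkeeping in the Leibniz expansion of $(\n_x A)(y,z)$; the rest of the computation is routine once the identity for $(\n_x T)(y) - (\n_y T)(x)$ is in hand.
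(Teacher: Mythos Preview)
Your proposal is correct and follows precisely the route the paper indicates: the paper's own proof is the single sentence ``By direct computations from \eqref{F4-9} and the Ricci identity $(\n_x\n_y\f)z-(\n_y\n_x\f)z=R(x,y)\f z-\f R(x,y)z$ follows \eqref{Rfih}.'' Your argument is a careful, fully worked-out version of exactly that computation, and each of the steps you outline (the formula $(\n_x\f)y=-\eta(y)\f\n_x\xi-h(x,\f y)\xi$, the identity $(\n_xT)(y)-(\n_yT)(x)=\f R(x,y)\xi$, and the final cancellation via $\f^2=-\mathrm{Id}+\eta\otimes\xi$) checks out.
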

\begin{proof}
By direct computations from \eqref{F4-9} and the Ricci
identity
$
\left(\n_x\n_y \f\right)z-\left(\n_y\n_x \f\right)z=R(x,y)\f z-\f
R(x,y)z$ follows \eqref{Rfih}.
\end{proof}

\begin{cor}\label{cor-U1}
On any almost contact B-metric manifold in $\U_1$ we have
\[ \sx\bigl\{R(x,y,\f^2 z,\f^2
w)+R(x,y,\f z,\f w)\bigr\}=0.
\]
\end{cor}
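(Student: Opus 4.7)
The plan is to reduce to \thmref{thm-R-F4-9}, which gives on any manifold in $\U\supset\U_1$ the identity
\[
2R(x,y,\f^2 z,\f^2 w)+2R(x,y,\f z,\f w)=(h\odot h)(x,y,z,w)+(h\odot h)(x,y,\f z,\f w),
\]
with $h(x,y)=(\n_x\eta)y$. It therefore suffices to show that the cyclic sum $\sx$ kills the right-hand side in $\U_1$.

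The central algebraic step will be to derive two symmetry properties of $h$ from the extra defining relation \eqref{F4-6}. First I would write $h(x,y)=F(x,\f y,\xi)$ via \eqref{eta-xi}, combine $F(x,y,\xi)=F(y,x,\xi)$ and $F(x,y,\xi)=-F(\f x,\f y,\xi)$ from \eqref{F4-6}, and use $F(\xi,\cdot,\cdot)=0$ inherited from $\U$ via \eqref{F4-9} to conclude $h(x,y)=h(y,x)$. Second, expanding $\f^2=-\mathrm{Id}+\eta\otimes\xi$ in the second slot and using the auxiliary identity $F(x,\xi,\xi)=0$ (which holds on every almost contact B-metric manifold because $(\n_x\f)\xi=-\f\n_x\xi$ and $\eta\circ\f=0$), I would rewrite $h(x,\f y)=-F(x,y,\xi)$ and then apply symmetry in the first two slots of $F(\cdot,\cdot,\xi)$ to obtain $h(x,\f y)=h(y,\f x)$.

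With $h$ symmetric, the Kulkarni--Nomizu product $h\odot h$ inherits all four curvature symmetries \eqref{R}; the first Bianchi identity for $h\odot h$ then gives $\sx(h\odot h)(x,y,z,w)=0$ at once. For the second term $\sx(h\odot h)(x,y,\f z,\f w)$ the Bianchi identity does not apply directly, since $\f$ sits inside the cycled slot, so I would expand it via the definition of $\odot$ into six bilinear terms in $h(\cdot,\f\cdot)$ and pair them off using the symmetry $h(a,\f b)=h(b,\f a)$; after this pairing the six summands cancel in three couples.

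I expect the main obstacle to be the careful bookkeeping of the two symmetries of $h$ simultaneously, and to correctly dispose of the $F(x,\xi,\xi)$ and $F(\xi,y,z)$ terms that emerge when the identity $\f^2=-\mathrm{Id}+\eta\otimes\xi$ is applied. Once these housekeeping identities are isolated, the rest of the argument is a short combinatorial cancellation.
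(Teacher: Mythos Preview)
Your plan is correct and matches the paper's own proof: the paper simply states that on $\U_1$ the tensor $h$ is symmetric and hybrid with respect to $\f$, and that these two properties make the cyclic sum of the right-hand side of \eqref{Rfih} vanish. You have spelled out in more detail how to extract the symmetry and the identity $h(x,\f y)=h(\f x,y)$ from \eqref{F4-6} and \eqref{eta-xi}, and how the six terms in $\sx(h\odot h)(x,y,\f z,\f w)$ pair off; this is exactly the computation the paper leaves implicit.
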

\begin{proof}
When a manifold belongs to  $\U_1$ then the corresponding
(0,2)-tensor $h$ is symmetric and hybrid with respect to $\f$.
Then the cyclic sum by $x,y,z$ of the right-hand side of
\eqref{Rfih} vanishes, which accomplish the proof.
\end{proof}

\begin{cor}\label{cor-U2}
On any almost contact B-metric manifold in $\U_2$ we have
\begin{equation*}\label{Rfi}
\begin{split}
R(\f x,\f y,\f^2 z,\f^2 w)&+R(\f x,\f y,\f z,\f w)\\[4pt]
&-R(x,y,\f^2 z,\f^2 w)-R(x,y,\f z,\f w)=0.
\end{split}
\end{equation*}
\end{cor}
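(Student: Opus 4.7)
The plan is to apply \thmref{thm-R-F4-9} at two different argument slots and subtract, then reduce the target identity to an algebraic identity on $h$ that follows from the defining condition \eqref{F4-7} of $\U_2$.

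First, since $\U_2\subset\U$, I would instantiate \eqref{Rfih} twice: once at $(x,y,z,w)$ and once at $(\f x,\f y,z,w)$. Subtracting the two versions, the desired identity is equivalent to the purely algebraic statement
\[
(h\odot h)(\f x,\f y,z,w)+(h\odot h)(\f x,\f y,\f z,\f w)-(h\odot h)(x,y,z,w)-(h\odot h)(x,y,\f z,\f w)=0.
\]

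The main work, and the only real obstacle, is to extract two $\f$-symmetry relations for $h$ from the $\U_2$-condition \eqref{F4-7}. By \eqref{eta-xi} we have $h(x,y)=F(x,\f y,\xi)$. Combining this with \eqref{F4-7} and the identity $\f^2y=-y+\eta(y)\xi$ — and noting the auxiliary facts $F(x,\xi,\xi)=0$ and $h(\xi,\cdot)=h(\cdot,\xi)=0$, which are immediate from \eqref{F4-9} — I expect to obtain
\[
h(\f x,\f y)=-h(x,y),\qquad h(\f x,y)=h(x,\f y).
\]
Care is needed to dispose of the residual $\eta(\cdot)F(\cdot,\xi,\xi)$ terms produced when expanding $\f^2$; this is where the $\U$-assumption enters crucially.

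Once these two symmetries are in hand, the Kulkarni--Nomizu cancellation is automatic. Using $(h\odot h)(x,y,z,w)=2\bigl[h(x,z)h(y,w)-h(y,z)h(x,w)\bigr]$, the first relation yields $(h\odot h)(\f x,\f y,\f z,\f w)=(h\odot h)(x,y,z,w)$ (four sign flips in each product), and the second yields $(h\odot h)(\f x,\f y,z,w)=(h\odot h)(x,y,\f z,\f w)$ by rewriting each $h(\f x,\cdot)$ as $h(x,\f\cdot)$. The four terms then cancel in pairs, completing the proof.
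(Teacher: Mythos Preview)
Your proposal is correct and follows essentially the same route as the paper: instantiate \eqref{Rfih} at $(x,y)$ and at $(\f x,\f y)$, reduce the claim to the equality of the two right-hand sides, and deduce that from the hybrid property of $h$. The only cosmetic difference is that the paper obtains the key symmetry via $\n_{\f x}\xi=\f\n_x\xi$ (from $g(\f x,y)=g(x,\f y)$ combined with \eqref{F4-7} and \eqref{eta-xi}), whereas you go through $h(x,y)=F(x,\f y,\xi)$ and manipulate \eqref{F4-7} directly; both arguments yield the same pair $h(\f x,y)=h(x,\f y)$, $h(\f x,\f y)=-h(x,y)$ and the cancellation in $h\odot h$ is then identical.
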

\begin{proof}
Since $g(x,\f y)=g(\f x,y)$ holds and \eqref{F4-7} is valid on a
manifold in $\U_2$, then using
\eqref{eta-xi} we obtain
$
    \n_{\f x}\xi=\f\n_x\xi.
$
The last equality implies that $h(x,y)$ is a hybrid with respect
to $\f$ and then the right-hand side of \eqref{Rfih} is equal to
the first line of \eqref{Rfi}.
%$R(\f x,\f y,\f^2 z,\f^2 w)+R(\f x,\f y,\f z,\f w)$.
Thus, the statement follows.
\end{proof}

\section{A Lie group as a 5-dimensional $\F_6$-manifold}\label{sec4}

Let $G$ be a 5-dimensional real connected Lie group and let
$\mathfrak{g}$ be its Lie algebra. If $\left\{e_i\right\}$ is a
global basis of left-invariant vector fields of $G$, we define an
almost contact structure $(\f,\xi,\eta)$ and a B-metric $g$ on $G$
as follows:
\begin{equation}\label{f}
\begin{array}{c}
\f e_1 = e_3,\quad \f e_2 = e_4,\quad \f e_3 =-e_1,\quad \f e_4 =
-e_2,\quad \f e_5 =0;\\[4pt]
\xi=e_5;\qquad \eta(e_i)=0\; (i=1,2,3,4),\quad \eta(e_5)=1;
\end{array}
\end{equation}
\begin{equation}\label{g}
\begin{array}{c}
g(e_1,e_1)=g(e_2,e_2)=-g(e_3,e_3)=-g(e_4,e_4)=g(e_5,e_5)=1,
\\[4pt]
g(e_i,e_j)=0,\; i\neq j,\quad  i,j\in\{1,2,3,4,5\}.
\end{array}
\end{equation}

We verify that \eqref{str} are satisfied for $\{e_i\}$. Thus, we
establish that the induced 5-dimensional manifold $\Lf$ is an
almost contact B-metric manifold.

Using the known property of the Levi-Civita connection $\n$ of  $g$
\begin{equation}\label{LC}
2g(\n_{e_i}
e_j,e_k)=g([e_i,e_j],e_k)+g([e_k,e_i],e_j)+g([e_k,e_j],e_i),
\end{equation}
we obtain the form of the basic tensor $F$ as follows
\begin{equation}\label{F}
\begin{split}
2F(e_i,e_j,e_k)= %
g\bigl([e_i,\f e_j]-\f[e_i,e_j],e_k\bigr)%
+&g\bigl([e_i,\f e_k]-\f[e_i,e_k],e_j\bigr)\\[4pt]%
+&g\bigl([e_k,\f e_j]-[\f e_k,e_j],e_i\bigr).%
\end{split}
\end{equation}
On any manifold in $\U_2$ the properties $F(e_i,e_j,\xi)=-F(\f
e_i,\f e_j,\xi)$ and $F(\xi,e_i,e_j)=0$ are valid. According to
\eqref{1.2} and \eqref{eta-xi}, the last two properties are
equivalent to $\n_{\f e_i}\xi=\f\n_{e_i}\xi$ and $\n_\xi \f
e_i=\f\n_\xi e_i$, respectively. These equalities imply the
following property on a manifold in $\U_2$
\begin{equation}\label{F6-1}
    [\f e_i,\xi]=\f [e_i,\xi].
\end{equation}
Using \eqref{F4-6} and  \eqref{F}, the following property is valid
for a manifold in $\U_1$
\begin{equation}\label{F6-2}
    \eta\bigl([e_i,e_j]\bigr)=0.
\end{equation}
To be the considered manifold in $\F_6$ we set $\theta(\xi)=\theta^*(\xi)=0$. Then, using \eqref{lee}
and the equality
$
2F(e_i,e_j,\xi)= %
-g\bigl([e_i,\xi],\f e_j\bigr)%
-g\bigl([e_j,\xi],\f e_i\bigr),
$
which is a corollary of \eqref{F}, we obtain conditions
\begin{equation}\label{leeF6}
    \theta(\xi)=-g^{ij}g\bigl([e_i,\xi],\f e_j\bigr)=0,\quad %
\theta^*(\xi)=g^{ij}g\bigl([e_i,\xi],e_j\bigr)=0.
\end{equation}

Now, let us consider the Lie algebra $\mathfrak{g}$ on $G$,
determined by the following non-zero commutators satisfying
conditions \eqref{F6-1} and \eqref{F6-2}:
\begin{equation*}\label{com1}
\begin{array}{l}
[e_1,\xi]=\lm_1e_1+\lm_2e_2+\lm_3e_3+\lm_4e_4,\\[4pt]
[e_2,\xi]=\mu_1e_1+\mu_2e_2+\mu_3e_3+\mu_4e_4, \\[4pt]
[e_3,\xi]=-\lm_3e_1-\lm_4e_2+\lm_1e_3+\lm_2e_4,\\[4pt]
[e_4,\xi]=-\mu_3e_1-\mu_4e_2+\mu_1e_3+\mu_2e_4,
\end{array}
\end{equation*}
where $\lm_i, \mu_i \in \R$ $(i=1,2,3,4)$%;
%$\mu_1^2+\mu_2^2\neq 0$
. %On constructing the commutators we have in mind the following

We compute $\theta(\xi)=2(\lm_1+\mu_2)$ and
$\theta^*(\xi)=2(\lm_3+\mu_4)$ using \eqref{leeF6} and then we
determine $\mu_2=-\lm_1$ and $\mu_4=-\lm_3$. Therefore,  we obtain
%the following form of the commutators in \eqref{com1}
\begin{equation}\label{com2}
\begin{split}
&[e_1,\xi]=\lm_1e_1+\lm_2e_2+\lm_3e_3+\lm_4e_4,
\\[4pt]
&[e_2,\xi]=\mu_1e_1-\lm_1e_2+\mu_3e_3-\lm_3e_4,
\\[4pt]
&[e_3,\xi]=-\lm_3e_1-\lm_4e_2+\lm_1e_3+\lm_2e_4,
\\[4pt]
&[e_4,\xi]=-\mu_3e_1+\lm_3e_2+\mu_1e_3-\lm_1e_4.
\end{split}
\end{equation}
It is easy to obtain that the other commutators $[e_i,e_j]$
$(i,j\in\{1,2,3,4\})$ are zero.

We verify immediately that the Jacobi identity of $[e_i,e_j]$ for
all  $i,j\in\{1,2,3,4,5\}$ is satisfied.

Using \eqref{F} and \eqref{com2}, we obtain that the non-zero components of $F$ are:
\begin{equation}\label{Fijk}
\begin{array}{c}
    F_{115}=-F_{225}=-F_{335}=F_{445}=\lm_3,\quad
F_{135}=-F_{245}=\lm_1,\\[4pt]
F_{145}=F_{235}=\frac{1}{2}\left(\lm_2+\mu_1\right),\quad
F_{125}=-F_{345}=\frac{1}{2}\left(\lm_4+\mu_3\right),
\end{array}
\end{equation}
where $F_{ij5}=F(e_i,e_j,\xi)$, $i,j\in\{1,2,3,4\}$.

\begin{thm}
Let $(G,\f,\xi,\eta,g)$ be the almost contact B-metric manifold,
determined by \eqref{f}, \eqref{g} and \eqref{com2}. Then it
belongs to the class $\F_6$.
\end{thm}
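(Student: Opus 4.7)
To prove that $(G,\f,\xi,\eta,g)$ lies in $\F_6$, I would first recall that $\F_6$ is the subclass of $\U_1 = \F_4\oplus\F_5\oplus\F_6$ (characterized by \eqref{F4-9} and \eqref{F4-6}) determined by the trace conditions $\theta = \theta^* = 0$. Since the components of $F$ in the chosen basis are already written out in \eqref{Fijk}, the verification reduces to reading these conditions off a finite table.

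The decisive feature of \eqref{Fijk} is that every non-zero component of $F$ has its third argument equal to $\xi$, i.e. only $F_{ij5}$ with $i,j\in\{1,2,3,4\}$ is non-zero. This observation by itself yields the two relations of \eqref{F4-9}: the condition $F(\xi,y,z)=0$ holds because no component has first index $5$, while the splitting $F(x,y,z)=\eta(y)F(x,z,\xi)+\eta(z)F(x,y,\xi)$ holds tautologically, since every non-zero value of the left-hand side forces $y$ or $z$ to equal $\xi$ (using the general symmetry $F(x,y,z)=F(x,z,y)$), in which case exactly one summand on the right recovers it.

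Next, I would verify the $\U_1$-condition \eqref{F4-6}, namely $F(x,y,\xi)=F(y,x,\xi)=-F(\f x,\f y,\xi)$. The symmetry in $x,y$ is apparent from \eqref{Fijk}, whose entries are indexed by unordered pairs; the $\f$-hybrid property is a case-by-case check using \eqref{f}, e.g.\ $-F(\f e_1,\f e_1,\xi)=-F_{335}=\lm_3=F_{115}$ and $-F(\f e_1,\f e_3,\xi)=-F(e_3,-e_1,\xi)=F_{315}=F_{135}=\lm_1$, with the remaining entries behaving analogously. Finally, for the $\F_6$-conditions $\theta\equiv 0$ and $\theta^*\equiv 0$, I would evaluate the traces on each basis vector using the diagonal inverse metric with entries $(1,1,-1,-1,1)$. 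At $z=e_k$ with $k\in\{1,2,3,4\}$ the traces vanish immediately from the structural observation above ($F_{ijk}=0$ for $k\neq 5$), while at $z=\xi$, substituting \eqref{Fijk} gives $\theta(\xi)=\lm_3-\lm_3+\lm_3-\lm_3=0$ and $\theta^*(\xi)=\lm_1-\lm_1+\lm_1-\lm_1=0$ --- precisely the identities engineered by the substitutions $\mu_2=-\lm_1$, $\mu_4=-\lm_3$ that led to \eqref{com2}. The only real obstacle is the bookkeeping in deriving \eqref{Fijk} from \eqref{F} and \eqref{com2}; once that table is in hand, each class condition reduces to direct inspection.
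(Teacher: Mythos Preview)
Your proposal is correct and follows exactly the paper's approach: a direct verification that the components in \eqref{Fijk} satisfy \eqref{F4-9}, \eqref{F4-6}, and $\theta=\theta^*=0$, with the paper merely asserting ``we check directly'' while you spell out the case analysis. The only substantive work, as you note, lies in deriving the table \eqref{Fijk} from \eqref{F} and \eqref{com2}; once that is granted, both your argument and the paper's are the same bookkeeping exercise.
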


\begin{proof}
We check directly that the components in \eqref{Fijk} satisfy
conditions \eqref{F4-9}, \eqref{F4-6} and $\ta=\ta^*=0$ for
$\F_6$. Therefore we establish that the corresponding manifold
$\Lf$ belongs to  $\F_6$. It is a $\F_0$-manifold if
and only if the condition
$\lm_1=\lm_2+\mu_1=\lm_3=\lm_4+\mu_3= 0$ holds.
\end{proof}

Using \eqref{LC} and \eqref{com2}, we obtain the components of $\n$:
\begin{equation}\label{nabli}
\begin{array}{l}
\n_{e_1}e_1=-\n_{e_2}e_2=-\n_{e_3}e_3=\n_{e_4}e_4=-\lm_1\xi,\quad \n_{\xi}\xi=0, \\[4pt]
\n_{e_1}e_2=\n_{e_2}e_1=-\n_{e_3}e_4=-\n_{e_4}e_3=-\frac{1}{2}\left(\lm_2+\mu_1\right), \\[4pt]
\n_{e_1}e_3=-\n_{e_2}e_4=\n_{e_3}e_1=-\n_{e_4}e_2=\lm_3\xi, \\[4pt]
\n_{e_1}e_4=\n_{e_2}e_3=\n_{e_3}e_2=\n_{e_4}e_1=\frac{1}{2}\left(\lm_4+\mu_3\right), \\[4pt]
\n_{e_1}\xi=\lm_1e_1+\frac{1}{2}\left(\lm_2+\mu_1\right)e_2+\lm_3e_3+\frac{1}{2}\left(\lm_4+\mu_3\right)e_4, \\[4pt]
\n_{e_2}\xi=\frac{1}{2}\left(\lm_2+\mu_1\right)e_1-\lm_1e_2+\frac{1}{2}\left(\lm_4+\mu_3\right)e_3-\lm_3e_4, \\[4pt]
\n_{e_3}\xi=-\lm_3e_1-\frac{1}{2}\left(\lm_4+\mu_3\right)e_2+\lm_1e_3+\frac{1}{2}\left(\lm_2+\mu_1\right)e_4, \\[4pt]
\n_{e_4}\xi=-\frac{1}{2}\left(\lm_4+\mu_3\right)e_1+\lm_3e_2+\frac{1}{2}\left(\lm_2+\mu_1\right)e_3-\lm_1e_4, \\[4pt]
\n_{\xi}e_1=-\frac{1}{2}\left(\lm_2-\mu_1\right)e_2-\frac{1}{2}\left(\lm_4-\mu_3\right)e_4, \\[4pt]
\n_{\xi}e_2=\frac{1}{2}\left(\lm_2-\mu_1\right)e_1+\frac{1}{2}\left(\lm_4-\mu_3\right)e_3,\\[4pt]
\n_{\xi}e_3=\frac{1}{2}\left(\lm_4-\mu_3\right)e_2-\frac{1}{2}\left(\lm_2-\mu_1\right)e_4,\\[4pt]
\n_{\xi}e_4=-\frac{1}{2}\left(\lm_4-\mu_3\right)e_1+\frac{1}{2}\left(\lm_2-\mu_1\right)e_3.
\end{array}
\end{equation}
%where $\lm_i, \mu_j \in\R$ $(i=1,2,3,4; j=1,2)$.

%\subsection{Curvature properties}

Let us denote $R_{ijkl}=R(e_{i},e_{j},e_{k},e_{l})$ and $\rho_{jk}=\rho(e_{j},e_{k})$.
According to (\ref{g}) and (\ref{nabli}), we obtain
\begin{equation}\label{Rijkl}
\begin{split}
R_{5115}&=-R_{5335}=\rho_{11}=-\rho_{33}\\[4pt]
&=-\lm_1^2+\lm_3^2-\frac{3}{4}\left(\lm_2^2-\lm_4^2\right)
+\frac{1}{4}\left(\mu_1^2-\mu_3^2\right)-\frac{1}{2}\left(\lm_2\mu_1-\lm_4\mu_3\right),
\\[4pt]
R_{5225}&=-R_{5445}=\rho_{22}=-\rho_{44}\\[4pt]
&=-\lm_1^2+\lm_3^2+\frac{1}{4}\left(\lm_2^2-\lm_4^2\right)
-\frac{3}{4}\left(\mu_1^2-\mu_3^2\right)-\frac{1}{2}\left(\lm_2\mu_1-\lm_4\mu_3\right),
\\[4pt]
R_{5125}&=-R_{5345}=\rho_{12}=-\rho_{34}=\lm_1\left(\lm_2-\mu_1\right)-\lm_3\left(\lm_4-\mu_3\right),\\[4pt]
R_{5145}&=R_{5235}=\rho_{14}=\rho_{23}=-\lm_1\left(\lm_4-\mu_3\right)-\lm_3\left(\lm_2-\mu_1\right),\\[4pt]
R_{5135}&=\rho_{13}=2\lm_1\lm_3+\frac{3}{2}\lm_2\lm_4-\frac{1}{2}\mu_1\mu_3+\frac{1}{2}\left(\lm_2\mu_3+\lm_4\mu_1\right),\\[4pt]
R_{5245}&=\rho_{24}=2\lm_1\lm_3-\frac{1}{2}\lm_2\lm_4+\frac{3}{2}\mu_1\mu_3+\frac{1}{2}\left(\lm_2\mu_3+\lm_4\mu_1\right),\\[4pt]
2\rho_{55}&=-8\left(\lm_1^2-\lm_3^2\right)
-2\left(\lm_2+\mu_1\right)^2 +2\left(\lm_4+\mu_3\right)^2=\tau.
\end{split}
\end{equation}
The rest of the non-zero components of  $R$ and  $\rho$ are determined by \eqref{Rijkl} and the properties
$R_{ijkl}=R_{klij}$, $R_{ijkl}=-R_{jikl}=-R_{ijlk}$  and $\rho_{jk}=\rho_{kj}$. Moreover, the
associated scalar curvature is
\begin{equation}\label{tau*}
\tau^*=8\lm_1\lm_3+2\left(\lm_2+\mu_1\right)\left(\lm_4+\mu_3\right).
\end{equation}

It is easy to check that the components of $R$ in \eqref{Rijkl}
satisfy the equalities in \thmref{thm-R-F4-9}, \corref{cor-U1} and
\corref{cor-U2}.

\begin{thm}\label{thm-isoF0}
Let $\Lf$ is the $\F_6$-manifold determined by \eqref{f},
\eqref{g} and \eqref{com2}. Then the following properties are
equivalent:
\begin{enumerate}\renewcommand{\labelenumi}{(\roman{enumi})}
    \item $\Lf$ is an isotropic-$\F_0$-manifold;
    \item $\Lf$ is scalar flat;
    \item $4\left(\lm_3^2-\lm_1^2\right)
-\left(\lm_2+\mu_1\right)^2+\left(\lm_4+\mu_3\right)^2=0$.
\end{enumerate}
\end{thm}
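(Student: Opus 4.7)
The plan is to show that (i) and (ii) are each directly equivalent to (iii) by computing $\nf$ and $\tau$ as quadratic forms in the parameters $\lm_i,\mu_i$, and observing they coincide up to a constant factor. Since the constructed $\F_6$-manifold lies in $\U$, \propref{prop-sn-4-9} applies: $\nf=-2\norm{\n\xi}$. Thus (i) reduces to the vanishing of $\norm{\n\xi}$, which is a scalar quantity that can be read off directly from the components of $\n_{e_i}\xi$ listed in \eqref{nabli}.

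First I would expand
\[
\norm{\n\xi}=\sum_{i=1}^{5}\varepsilon_i\, g(\n_{e_i}\xi,\n_{e_i}\xi),
\qquad \varepsilon_1=\varepsilon_2=\varepsilon_5=1,\quad \varepsilon_3=\varepsilon_4=-1,
\]
using that the basis $\{e_i;\xi\}$ is $g$-orthogonal with the signs prescribed by \eqref{g} and that $\n_\xi\xi=0$. The four vectors $\n_{e_1}\xi,\ldots,\n_{e_4}\xi$ have the same quartet of coefficients $\lm_1,\tfrac12(\lm_2+\mu_1),\lm_3,\tfrac12(\lm_4+\mu_3)$, merely permuted and with sign changes dictated by the $\f$-action. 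After contracting with the indefinite metric, the cross terms cancel and a routine bookkeeping yields
\[
\norm{\n\xi}=4(\lm_1^2-\lm_3^2)+(\lm_2+\mu_1)^2-(\lm_4+\mu_3)^2,
\]
so by \propref{prop-sn-4-9},
\[
\nf=-2\bigl[4(\lm_1^2-\lm_3^2)+(\lm_2+\mu_1)^2-(\lm_4+\mu_3)^2\bigr].
\]
This shows that (i) $\Leftrightarrow$ (iii).

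Next I would compare this with the expression for $\tau$ obtained in the last line of \eqref{Rijkl},
\[
\tau=-8(\lm_1^2-\lm_3^2)-2(\lm_2+\mu_1)^2+2(\lm_4+\mu_3)^2
    =2\bigl[4(\lm_3^2-\lm_1^2)-(\lm_2+\mu_1)^2+(\lm_4+\mu_3)^2\bigr],
\]
which is exactly twice the quadratic form in (iii). Hence (ii) $\Leftrightarrow$ (iii), and the three conditions are pairwise equivalent.

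No single step is genuinely difficult; the only real obstacle is keeping track of signs, since the metric is indefinite and the components of $\n_{e_i}\xi$ mix the spacelike indices $1,2$ with the timelike indices $3,4$. I would therefore organise the computation of $\norm{\n\xi}$ into four parallel lines (one per $e_i$), sum them with the weights $\varepsilon_i$, and only then compare the resulting expression with $\tau$. No extra input beyond \propref{prop-sn-4-9}, \eqref{g}, \eqref{nabli} and \eqref{Rijkl} is needed.
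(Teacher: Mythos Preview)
Your proof is correct and follows essentially the same line as the paper's: compute the square norm explicitly in terms of the parameters, compare with the expression for $\tau$ from \eqref{Rijkl}, and invoke \propref{prop-sn-4-9}. The only cosmetic difference is that the paper evaluates $\nf$ directly from the components $F_{ij5}$ in \eqref{Fijk}, whereas you compute $\norm{\n\xi}$ from the entries of $\n_{e_i}\xi$ in \eqref{nabli} and then pass to $\nf$ via \propref{prop-sn-4-9}; these are two routes to the same scalar and the logical structure is identical.
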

\begin{proof}
By virtue of \eqref{snf} and  \eqref{Fijk} we get
$\nf=4\left(\lm_3^2-\lm_1^2\right)-\left(\lm_2+\mu_1\right)^2
+\left(\lm_4+\mu_3\right)^2$. Having in mind
\propref{prop-sn-4-9}, the latter equality and the last line of
\eqref{Rijkl}, we have
\[
\begin{split}
\tau&=2\nf=-4\norm{\nabla \eta}=-4\norm{\nabla \xi}\\[4pt]
&=8\left(\lm_3^2-\lm_1^2\right)
-2\left(\lm_2+\mu_1\right)^2+2\left(\lm_4+\mu_3\right)^2.
\end{split}
\]
Then the statement follows immediately.
\end{proof}

Let us remark, there exists such a $\F_6$-manifold which is a
scalar flat isotropic-$\F_0$-manifold but it is not a
$\F_0$-manifold. For example, such a case is when $\lm_1=\lm_3\neq
0$, $\mu_1=-\lm_2$ and $\mu_3=-\lm_4$.

According to \eqref{g}, \eqref{rho}, \eqref{Rijkl} and
\eqref{tau*}, we obtain
\begin{prop}\label{prop-almEinst}
The $\F_6$-manifold $\Lf$, determined by \eqref{f}, \eqref{g} and
\eqref{com2}, is almost Einsteinian, i.e. $\rho_{ij}=\nu
g_{ij}+\tilde{\nu}\tilde{g}_{ij}$, if and only if the following
conditions are valid
\[
\mu_1=\lm_2,\qquad \mu_3=\lm_4, \qquad
3\left(\lm_1^2+\lm_2^2-\lm_3^2-\lm_4^2\right)-2\left(\lm_1\lm_3+\lm_2\lm_4\right)=0.
\]
In this case we have
$$
\tau=8\nu=-8\left(\lm_1^2+\lm_2^2-\lm_3^2-\lm_4^2\right),\qquad
\tau^*=-4\tilde{\nu}=8\left(\lm_1\lm_3+\lm_2\lm_4\right),
$$ i.e.
$3\tau+2\tau^*=0$ and
$\rho_{ij}=\frac{\tau}{8}\left(g_{ij}+3\tilde{g}_{ij}\right)$.
 \qed
\end{prop}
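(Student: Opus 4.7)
The plan is to translate the almost-Einstein condition into a short list of scalar constraints and then match them against the explicit Ricci components of \eqref{Rijkl}. First I would compute the associated B-metric $\tilde g$ in the basis $\{e_i\}$ from \eqref{f} and \eqref{g}; a short check shows that its only non-zero components are $\tilde g_{13}=\tilde g_{24}=-1$ and $\tilde g_{55}=1$. Combined with the diagonal form of $g$, the relation $\rho_{ij}=\nu g_{ij}+\tilde\nu\tilde g_{ij}$ reduces to the system $\rho_{11}=\rho_{22}=\nu$, $\rho_{13}=\rho_{24}=-\tilde\nu$, $\rho_{55}=\nu+\tilde\nu$, $\rho_{12}=\rho_{14}=0$; the remaining equations are automatic from the symmetries $\rho_{33}=-\rho_{11}$, $\rho_{34}=-\rho_{12}$, $\rho_{23}=\rho_{14}$ already recorded in \eqref{Rijkl}.

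The next step is to impose $\rho_{12}=0$ and $\rho_{14}=0$. Writing $a=\lm_2-\mu_1$ and $b=\lm_4-\mu_3$, the relevant entries of \eqref{Rijkl} give the linear system $\lm_1 a-\lm_3 b=0$ and $\lm_3 a+\lm_1 b=0$, whose coefficient determinant is $\lm_1^2+\lm_3^2$; in the generic case this forces $\mu_1=\lm_2$ and $\mu_3=\lm_4$, while the degenerate branch $\lm_1=\lm_3=0$ requires a brief separate inspection of the diagonal equations. After substituting these two identifications into \eqref{Rijkl}, I expect $\rho_{11}$ and $\rho_{22}$ both to collapse to $-(\lm_1^2+\lm_2^2-\lm_3^2-\lm_4^2)$, and $\rho_{13}$, $\rho_{24}$ both to collapse to $2(\lm_1\lm_3+\lm_2\lm_4)$; these readings identify $\nu$ and $\tilde\nu$ respectively. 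Verifying that each pair coincides after the substitution will be the main computational obstacle, but it is purely a matter of collecting like terms.

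Finally, the last equation $\rho_{55}=\nu+\tilde\nu$ is read from the last line of \eqref{Rijkl}, which after substitution yields $\rho_{55}=4\nu$. This forces $\tilde\nu=3\nu$, and when written in terms of the $\lm_i$ it is exactly the third stated condition $3(\lm_1^2+\lm_2^2-\lm_3^2-\lm_4^2)-2(\lm_1\lm_3+\lm_2\lm_4)=0$. The claimed formulas $\tau=8\nu$ and $\tau^*=-4\tilde\nu$ then drop out by plugging $\mu_1=\lm_2$, $\mu_3=\lm_4$ into the expression for $\tau$ in \eqref{Rijkl} and into \eqref{tau*}; the identity $3\tau+2\tau^*=0$ is merely $\tilde\nu=3\nu$ rewritten; and the closed form $\rho_{ij}=\tfrac{\tau}{8}(g_{ij}+3\tilde g_{ij})$ follows from expressing $\nu$ and $\tilde\nu$ in terms of $\tau$ via the same relation.
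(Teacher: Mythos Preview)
Your plan follows exactly the direct-computation route the paper takes: the paper's proof is literally the one sentence ``According to \eqref{g}, \eqref{rho}, \eqref{Rijkl} and \eqref{tau*}, we obtain'', so your reduction of $\rho_{ij}=\nu g_{ij}+\tilde\nu\tilde g_{ij}$ to the scalar system $\rho_{11}=\rho_{22}=\nu$, $\rho_{13}=\rho_{24}=-\tilde\nu$, $\rho_{55}=\nu+\tilde\nu$, $\rho_{12}=\rho_{14}=0$ and your subsequent substitutions are precisely the intended argument, only spelled out in more detail than the paper gives.

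One loose end is worth tightening. You rightly flag the branch $\lm_1=\lm_3=0$ as needing separate inspection, but that inspection is not quite as brief as you suggest. With $\lm_1=\lm_3=0$ the equations $\rho_{12}=\rho_{14}=0$ are vacuous, and the constraints $\rho_{11}=\rho_{22}$ and $\rho_{13}=\rho_{24}$ coming from \eqref{Rijkl} only force $(\mu_1+i\mu_3)^2=(\lm_2+i\lm_4)^2$; besides the stated solution $\mu_1=\lm_2$, $\mu_3=\lm_4$ one also has $\mu_1=-\lm_2$, $\mu_3=-\lm_4$. On that second branch every component in \eqref{Fijk} vanishes (so the manifold degenerates to $\F_0$) and one checks from \eqref{Rijkl} that $\rho=0$, which is trivially almost Einsteinian with $\nu=\tilde\nu=0$ yet does not satisfy the first two displayed conditions unless $\lm_2=\lm_4=0$. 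The paper's one-line proof glosses over this too; you should either record it explicitly as a degenerate Ricci-flat $\F_0$ exception or note that it is still consistent with the closing formula $\rho_{ij}=\tfrac{\tau}{8}(g_{ij}+3\tilde g_{ij})$ since $\tau=0$ there.
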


Bearing in mind \thmref{thm-isoF0} and \propref{prop-almEinst}, we
have immediately
\begin{cor}
The $\F_6$-manifold $\Lf$, determined by \eqref{f}, \eqref{g} and
\eqref{com2}, is a scalar flat almost Einsteinian
isotropic-$\F_0$-manifold, if and only if the following condition
is valid
$
\lm_1^2+\lm_2^2-\lm_3^2-\lm_4^2=0.
$
 \qed
\end{cor}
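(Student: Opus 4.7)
The plan is to combine the two characterizations that the corollary explicitly cites. By \propref{prop-almEinst}, the almost Einsteinian hypothesis is captured by $\mu_1=\lm_2$, $\mu_3=\lm_4$ together with the auxiliary relation $3(\lm_1^2+\lm_2^2-\lm_3^2-\lm_4^2)-2(\lm_1\lm_3+\lm_2\lm_4)=0$. By \thmref{thm-isoF0}, in the present $\F_6$ setting scalar flatness and the isotropic-$\F_0$ property are equivalent, and each is in turn equivalent to the single quadratic condition
\[
4(\lm_3^2-\lm_1^2)-(\lm_2+\mu_1)^2+(\lm_4+\mu_3)^2=0.
\]
So the argument reduces to substituting the two linear almost-Einstein identities into this quadratic condition and simplifying.

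The substitution $\mu_1=\lm_2$, $\mu_3=\lm_4$ collapses $(\lm_2+\mu_1)^2=4\lm_2^2$ and $(\lm_4+\mu_3)^2=4\lm_4^2$, so the displayed condition becomes $4(\lm_3^2-\lm_1^2)-4\lm_2^2+4\lm_4^2=0$, i.e.\ exactly $\lm_1^2+\lm_2^2-\lm_3^2-\lm_4^2=0$, after dividing by $4$ and rearranging. This chain of equivalences is reversible: under the standing almost Einstein hypothesis, the single equation $\lm_1^2+\lm_2^2-\lm_3^2-\lm_4^2=0$ reinstates condition (iii) of \thmref{thm-isoF0} and hence yields both scalar flatness and the isotropic-$\F_0$ property. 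This gives the stated biconditional.

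There is no real obstacle; the proof is a one-line algebraic reduction. The only point worth noting for bookkeeping is that the cubic almost-Einstein identity $3(\lm_1^2+\lm_2^2-\lm_3^2-\lm_4^2)-2(\lm_1\lm_3+\lm_2\lm_4)=0$ from \propref{prop-almEinst} is already part of the almost Einstein assumption and, combined with our new condition, merely forces the auxiliary relation $\lm_1\lm_3+\lm_2\lm_4=0$; it does not need to be displayed separately in the statement of the corollary.
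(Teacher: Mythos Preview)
Your argument is correct and is exactly the route the paper intends: the corollary is marked \qed\ with no displayed proof, and the preceding sentence simply says it follows from \thmref{thm-isoF0} and \propref{prop-almEinst}; you have written out precisely that combination (substituting $\mu_1=\lm_2$, $\mu_3=\lm_4$ into condition~(iii) of \thmref{thm-isoF0}, or equivalently reading off $\tau=-8(\lm_1^2+\lm_2^2-\lm_3^2-\lm_4^2)$ from \propref{prop-almEinst}). One cosmetic slip: the auxiliary almost-Einstein relation you call ``cubic'' is in fact quadratic.
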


\bigskip

\small{ \noindent
\textsl{Mancho Manev\\
Department of Geometry\\
Faculty of Mathematics and Informatics\\
University of Plovdiv\\
236 Bulgaria Blvd\\
4003 Plovdiv, Bulgaria}
\\
\texttt{e-mail: mmanev@uni-plovdiv.bg} }

\end{document}